\theoremstyle{plain}
\newtheorem{theorem}{Theorem}
\numberwithin{equation}{section}
\newcommand{\ii}{{\rm i}}
\newcommand{\dd}{{\rm dn}_3}
\newcommand{\aaa}{\theta}
\newcommand{\sss}{{\rm sn}_3}
\newcommand{\ccc}{{\rm cn}_3}
\begin{document}

\title {The elliptic function ${\rm dn}_3$ of Shen}

\date{}

\author[P.L. Robinson]{P.L. Robinson}

\address{Department of Mathematics \\ University of Florida \\ Gainesville FL 32611  USA }

\email[]{paulr@ufl.edu}

\subjclass{} \keywords{}

\begin{abstract}

We analyze the elliptic function ${\rm dn}_3$ introduced by Li-Chien Shen, contributing to the Ramanujan theory of elliptic functions in signature three. A famous hypergeometric identity emerges from our analysis. 

\end{abstract}

\maketitle

\medbreak

\section*{Introduction}

\medbreak 

Li-Chien Shen [5] adapted one of the classical approaches to the Jacobian elliptic functions and thereby produced a ${\rm dn}$-counterpart $\dd$ as a contribution to the Ramanujan theory of elliptic functions in signature three. Shen analyzed several structural aspects of the elliptic function $\dd$: in particular, he located its poles and expressed it in terms of theta functions; among the by-products of his analysis is a proof of the cubic identity `$a^2 = b^2 + c^2$' of the brothers Borwein. 

\medbreak 

In the present paper, we present a relatively detailed account of the elliptic function $\dd$: as well as locating its poles, we also give concrete expressions for its critical values and its fundamental periods; as a by-product of our analysis, we encounter a famous hypergeometric identity. Throughout, we choose to avoid the use of theta functions: in doing so, we forgo some of the concomitant benefits; as compensation, our arguments are perhaps more direct. 

\medbreak 

\section*{Definition} 

\medbreak 

Fix moduli $\kappa \in (0, 1)$ and $\lambda \in (0, 1)$ that are complementary in the usual sense that $\kappa^2 + \lambda^2 = 1$. An analytic  function $f$ is defined in a neighbourhood of the origin by the rule that if $T$ is near $0$ then 
$$f (T) = \int_0^T F(\tfrac{1}{3}, \tfrac{2}{3} ; \tfrac{1}{2} ; \kappa^2 \sin^2 t) \, {\rm d}t.$$ 
If $T$ is taken to be real, then of course it need not be restricted as to size; if $T$ is complex, then we need only restrict $T$ so that $\kappa \, |\sin| < 1$  on the closed disc of radius $|T|$ about $0$. Note that 
$$f' (T) = F(\tfrac{1}{3}, \tfrac{2}{3} ; \tfrac{1}{2} ; \kappa^2 \sin^2 T)$$
so $f'(0) = 1 \neq 0$ in particular. It follows that $f$ has an analytic inverse in a neighbourhood of $0$. We shall denote this inverse by $\phi$: thus, $\phi(0) = 0$ and if $u$ is near $0$ then 
$$u = \int_0^{\phi(u)} F(\tfrac{1}{3}, \tfrac{2}{3} ; \tfrac{1}{2} ; \kappa^2 \sin^2 t) \, {\rm d}t.$$ 
An auxiliary analytic function $\psi$ is defined on a (possibly smaller) neighbourhood of $0$ by $\psi(0) = 0$ and the requirement 
$$\sin \psi = \kappa \, \sin \phi.$$ 

\medbreak 

Now, the elliptic function $\dd$ of Shen is initially defined to be the derivative of the local inverse $\phi$: that is, 
$$\dd = \phi '.$$
The precise neighbourhood of $0$ on which $\dd$ is initially defined is unimportant: for the present,  it may be assumed that all calculations are performed in an open disc $D$ about $0$ on which $\psi$, $\phi$ and (of course) $\dd$ are analytic. As we shall see, $\dd$ turns out to be the restriction of a uniquely-defined elliptic function; this elliptic extension is $\dd$ proper. 

\medbreak 

\section*{Algebra} 

\medbreak 

We shall establish the elliptic nature of $\dd$ by confirming that it satisfies a specific first-order differential equation. The derivation of this differential equation is facilitated by the introduction of two new functions via composition: 
$$\ccc = \cos \circ \phi \; \; {\rm and} \; \; \sss = \sin \circ \phi$$ 
initially defined on the disc $D$. These functions may be viewed as analogues of the classical Jacobian elliptic functions ${\rm cn}$ and ${\rm sn}$; they certainly satisfy the `Pythagorean' identity 
$$\ccc^2 + \sss^2 = 1.$$
The function $\dd$ was originally introduced as a substitute for the third Jacobian elliptic function ${\rm dn}$; in place of the Jacobian identity ${\rm dn}^2 + \kappa^2 {\rm sn}^2 = 1$ we have the following. 

\medbreak 

\begin{theorem} \label{Py}
The functions $\dd$ and $\sss$ satisfy the equivalent relations 
$$4(1 - \kappa^2 \sss^2) = \dd^3 + 3 \, \dd^2$$
and 
$$4 \kappa^2 \sss^2 = (1 - \dd)(2 + \dd)^2.$$
\end{theorem}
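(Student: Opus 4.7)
The plan is to obtain a closed-form trigonometric expression for $\dd$ via the defining integral together with a classical hypergeometric identity, and then to deduce the stated relations by a triple-angle calculation. First I would differentiate the defining integral for $\phi$ at the upper limit: the relation $u = \int_0^{\phi(u)} F(\tfrac{1}{3}, \tfrac{2}{3}; \tfrac{1}{2}; \kappa^2 \sin^2 t)\,{\rm d}t$ gives
$$\dd(u) = \phi'(u) = \frac{1}{F(\tfrac{1}{3}, \tfrac{2}{3}; \tfrac{1}{2}; \kappa^2 \sss^2(u))},$$
and since $\sin \psi = \kappa \sss$ by definition we may rewrite this as
$$\dd = \frac{1}{F(\tfrac{1}{3}, \tfrac{2}{3}; \tfrac{1}{2}; \sin^2 \psi)}.$$

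The next step is to invoke the classical identity
$$F\bigl(\tfrac{1}{3}, \tfrac{2}{3}; \tfrac{1}{2}; \sin^2 \psi\bigr) = \frac{\cos(\psi/3)}{\cos \psi},$$
which I take to be ``the famous hypergeometric identity'' flagged in the introduction. It follows at once by composing Euler's transformation $F(\tfrac{1}{3}, \tfrac{2}{3}; \tfrac{1}{2}; z) = (1-z)^{-1/2} F(\tfrac{1}{6}, -\tfrac{1}{6}; \tfrac{1}{2}; z)$ with the Chebyshev-type evaluation $F(a, -a; \tfrac{1}{2}; \sin^2 \theta) = \cos(2 a \theta)$ applied at $a = \tfrac{1}{6}$; alternatively, both sides satisfy the same second-order linear ODE in $\psi$ with matching initial data at $\psi = 0$. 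Either way, we obtain the closed form
$$\dd = \frac{\cos \psi}{\cos(\psi/3)}.$$

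The remainder is pure trigonometry. The triple-angle formula $\cos \psi = 4 \cos^3(\psi/3) - 3 \cos(\psi/3)$ rewrites $\dd$ as $4 \cos^2(\psi/3) - 3$, so $\dd + 3 = 4 \cos^2(\psi/3)$ and
$$\dd^2 (\dd + 3) = 4 \cos^2(\psi/3)\,\bigl[4 \cos^2(\psi/3) - 3\bigr]^2 = 4 \bigl[\cos(\psi/3) \bigl(4 \cos^2(\psi/3) - 3\bigr)\bigr]^2 = 4 \cos^2 \psi = 4(1 - \kappa^2 \sss^2),$$
which is the first stated relation. The equivalent second form drops out by expanding $(1 - \dd)(2 + \dd)^2 = 4 - 3 \dd^2 - \dd^3$ and substituting. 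The only real obstacle is the hypergeometric identity itself; granted it, everything else is mechanical.
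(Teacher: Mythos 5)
Your proposal is correct and follows essentially the same route as the paper: express $\dd$ as the reciprocal of $F(\tfrac{1}{3},\tfrac{2}{3};\tfrac{1}{2};\sin^2\psi)$ via the inverse-function derivative, apply the identity $F(\tfrac{1}{3},\tfrac{2}{3};\tfrac{1}{2};\sin^2 t)=\cos\tfrac{1}{3}t/\cos t$ to get $\dd=\cos\psi/\cos\tfrac{1}{3}\psi$, and finish with the triple-angle formula. Your reorganization of the last step (solving for $\dd=4\cos^2\tfrac{1}{3}\psi-3$ and computing $\dd^2(\dd+3)$ directly) and your justification of the hypergeometric identity via Euler's transformation are both sound, but do not constitute a different argument.
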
 

\begin{proof} 
As $\phi$ is a local inverse to $f$ it follows that 
$$\dd = \phi' = \frac{1}{f' \circ \phi} = \frac{1}{F(\tfrac{1}{3}, \tfrac{2}{3} ; \tfrac{1}{2} ; \kappa^2 \sin^2 \phi)} = \frac{1}{F(\tfrac{1}{3}, \tfrac{2}{3} ; \tfrac{1}{2} ; \sin^2 \psi)}.$$
On account of the standard hypergeometric identity 
$$F(\tfrac{1}{3}, \tfrac{2}{3} ; \tfrac{1}{2} ; \sin^2 t) = \frac{\cos \frac{1}{3} t}{\cos t}$$
we deduce that 
$$\dd = \frac{\cos \psi}{\cos \frac{1}{3} \psi}.$$
Trigonometric triplication yields
$$\cos \psi = 4 \cos^3 \tfrac{1}{3} \psi - 3 \cos \tfrac{1}{3} \psi$$
whence substitution of $\cos \psi / \dd$ for $\cos \frac{1}{3} \psi$ followed by cancellation of $\cos \psi$ and multiplication throughout by $\dd^3$ lead after rearrangement to 
$$\dd^3 + 3 \, \dd^2 = 4 \cos^2 \psi = 4 (1 - \sin^2 \psi) = 4 (1 - \kappa^2 \sin^2 \phi) = 4 (1 - \kappa^2 \sss^2).$$
This establishes the first claimed equation; the second is a matter of factorization. 
\end{proof} 

\medbreak 

Note that the `Pythagorean' identity allows us to deduce from this the equation  
$$4 \kappa^2 \ccc^2 = \dd^3 + 3 \, \dd^2 - 4 \lambda^2.$$ 

\medbreak 

\section*{Ellipticity}

\medbreak 

Regarding derivatives, the counterparts  
$$\sss ' = \ccc \, \phi \,' = \ccc \, \dd$$ 
$$\ccc ' = - \sss \, \phi \,' = - \sss \, \dd$$
of the classical Jacobian formulae follow at once from the definitions. The derivative of $\dd$ itself deviates from the classical form as follows. 

\medbreak 

\begin{theorem} \label{d'}
The function $\dd$ satisfies the first-order differential equation 
$$(\dd ')^2 = \frac{4}{9} (1 - \dd)\,(\dd^3 + 3 \, \dd^2 - 4 \lambda^2).$$
\end{theorem}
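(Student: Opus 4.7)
The plan is to produce the differential equation by differentiating the identity of Theorem \ref{Py} and then eliminating $\sss$ and $\ccc$ using the two quadratic relations that are available. Concretely, I would differentiate
$$\dd^3 + 3\,\dd^2 = 4(1 - \kappa^2 \sss^2),$$
apply the rule $\sss' = \ccc\,\dd$ on the right, and factor $3\,\dd\,\dd'(\dd+2)$ on the left. A common factor of $\dd$ can then be removed (initially on any open set where $\dd \neq 0$, hence everywhere by analytic continuation), leaving the first-order relation
$$3\,\dd'(\dd + 2) = -8\kappa^2\,\sss\,\ccc.$$

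The next step is to square this relation, producing $9(\dd')^2(\dd+2)^2$ on the left and $64\kappa^4\sss^2\ccc^2$ on the right. Here the two quadratic relations do the rest of the work: the second form of Theorem \ref{Py} supplies $4\kappa^2\sss^2 = (1 - \dd)(2 + \dd)^2$, and the Pythagorean corollary immediately following its proof supplies $4\kappa^2\ccc^2 = \dd^3 + 3\,\dd^2 - 4\lambda^2$. Substituting these into the squared identity yields a $(2 + \dd)^2$ on each side, which cancels, leaving the claimed formula.

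There is no real obstacle, only a bookkeeping point: two cancellations (by $\dd$ and by $(\dd+2)^2$) occur in the course of the argument. Each is legitimate because both sides of the relevant identity are analytic on the disc $D$, so once the identity is verified on the complement of a discrete set of zeros it persists by continuity; alternatively, one can observe that the two sides have matching zeros of the same order and factor accordingly. Apart from this, the proof reduces to a routine piece of algebra that is effectively dictated by Theorem \ref{Py}.
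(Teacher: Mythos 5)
Your proposal is correct and follows essentially the same route as the paper's own proof: differentiate the first identity of Theorem \ref{Py}, cancel $\dd$, square, substitute for $4\kappa^2\sss^2$ and $4\kappa^2\ccc^2$ from Theorem \ref{Py} and its Pythagorean corollary, and cancel $(\dd+2)^2$. The remarks on justifying the two cancellations by analytic continuation are a sensible (if tacit in the paper) addition.
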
 

\begin{proof} 
Differentiate through the first equation of Theorem \ref{Py}: in view of $\sss ' = \ccc \, \dd$ as noted above, it follows after cancellation of $\dd$ that 
$$- 8 \, \kappa^2 \, \sss \, \ccc = 3 \, (\dd + 2) \, \dd '$$
and thence that 
$$64 \, \kappa^4 \, \sss^2 \, \ccc^2 = 9 \, (\dd + 2)^2 \, (\dd ')^2.$$
Here, substitute for $4 \kappa^2 \sss^2$ and $4 \kappa^2 \ccc^2$ directly from Theorem \ref{Py}. Finally, cancel $(\dd + 2)^2$ throughout to conclude the argument. 
\end{proof} 

\medbreak 

Armed with this differential equation, we are now in a position to recognize that $\dd$ is the restriction of an elliptic function, which we identify in terms of its coperiodic Weierstrass function. 

\medbreak 

\begin{theorem} \label{ddwp}
The function $\dd$ satisfies 
$$(1 - \dd)\, (\frac{1}{3} + \wp) = \frac{4}{9} \, \kappa^2$$ 
where $\wp$ is the Weierstrass function with invariants 
$$g_2 = \frac{4}{27} (9 - 8 k^2) =  \frac{4}{27} (1 + 8 \lambda^2)$$
$$g_3 = \frac{8}{729} (8 k^4 - 36 k^2 + 27) = \frac{8}{729} (8 \lambda^4 + 20 \lambda^2 - 1).$$ 
\end{theorem}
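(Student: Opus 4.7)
The plan is to introduce the candidate Weierstrass function $W$ defined by rewriting the stated identity, namely
\[ W := \frac{4\kappa^2/9}{1 - \dd} - \frac{1}{3}, \]
so that proving the theorem amounts to showing $W = \wp$. Since $\dd(0) = \phi'(0) = 1$ and $\dd$ is an even function of its argument (because $f$ is odd, hence $\phi$ is odd), $W$ inherits evenness and has a pole at the origin. I will establish that $W$ satisfies $(W')^2 = 4 W^3 - g_2 W - g_3$ with the prescribed invariants, and then invoke evenness and the pole at $0$ to conclude $W = \wp$.

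The core computation is straightforward but bookkeeping-heavy. Writing $A = 4\kappa^2/9$ and $u = W + \tfrac{1}{3}$, so that $1 - \dd = A/u$ and $\dd + 2 = (3u - A)/u$, the derivative of $W$ is $W' = A\, \dd'/(1 - \dd)^2$, and Theorem \ref{d'} yields
\[ (W')^2 = \frac{4 A^2}{9} \cdot \frac{\dd^3 + 3 \dd^2 - 4 \lambda^2}{(1 - \dd)^3}. \]
Using the factorization $\dd^3 + 3 \dd^2 - 4 = (\dd - 1)(\dd + 2)^2$, the numerator becomes $(\dd - 1)(\dd + 2)^2 + 4 \kappa^2$, so
\[ (W')^2 = 4 u^3 - \tfrac{4}{9}(3 u - A)^2 = 4 u^3 - 4 u^2 + \tfrac{8 A}{3} u - \tfrac{4 A^2}{9}. \]
Expanding $u = W + \tfrac{1}{3}$, the $W^2$-coefficient cancels automatically, and comparing the coefficients of $W$ and the constant term against $4W^3 - g_2 W - g_3$ and substituting $A = 4 \kappa^2/9$ reproduces exactly the stated $g_2$ and $g_3$. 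I expect this algebraic check to be the bulk of the proof, but the cancellation of the $W^2$ term is a reassuring sign that the substitution is correctly calibrated.

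Finally, I need to upgrade $W$ from a solution of the Weierstrass ODE to the function $\wp$ itself. A meromorphic solution of $(W')^2 = 4 W^3 - g_2 W - g_3$ is of the form $\wp(z - z_0)$, so one must pin down $z_0 \equiv 0$ modulo the period lattice. Evenness of $W$ forces $z_0$ to lie in the half-period lattice, and the fact that $W$ has a genuine pole (rather than a finite critical value) at $z = 0$ rules out the half-period options. The main obstacle is really just the patient verification of the invariants in the middle step; the identification of $z_0 = 0$ from symmetry and pole location is clean once the ODE is in hand.
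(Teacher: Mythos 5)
Your proposal is correct and follows essentially the same route as the paper: both define the auxiliary function $p = \tfrac{4}{9}\kappa^2(1-\dd)^{-1} - \tfrac{1}{3}$, verify via Theorem \ref{d'} that it satisfies $(p')^2 = 4p^3 - g_2 p - g_3$ with the stated invariants, and identify it with $\wp$ from its pole at the origin. The only difference is that you carry out the algebraic verification explicitly (correctly, as far as I can check) where the paper merely asserts ``it turns out that $p$ satisfies the differential equation.''
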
 

\begin{proof} 
Transform the differential equation of Theorem \ref{d'} by defining the new function 
$$p = \frac{4}{9} \kappa^2 (1 - \dd)^{-1} - \frac{1}{3}.$$
It turns out that $p$ satisfies the differential equation 
$$(p')^2 = 4 p^3 - g_2 p - g_3$$
in which the coefficients $g_2$ and $g_3$ are as advertised; as $p$ has a pole at $0$, we conclude that $p$ is indeed the Weierstrass function $\wp$ with $g_2$ and $g_3$ as invariants. 
\end{proof} 

\medbreak 

The foregoing proof is condensed from [4]; see [5] for the original argument, which is of independent interest. 

\medbreak 

We have now arrived at the elliptic function $\dd$ proper: namely 
$$\dd = 1 - \frac{\frac{4}{9} \kappa^2}{\frac{1}{3} + \wp}$$
in terms of its coperiodic Weierstrass function $\wp$. This expression makes it plain that the elliptic function $\dd$ has order two; moreover, that $\dd$ is both even (in the usual sense that $\dd(-z) = \dd(z))$) and `real' (in the sense that $\overline{\dd (\overline{z})} = \dd(z)$). 

\medbreak We can say more. The Weierstrass function $\wp$ has discriminant 
$$g_2^3 - 27 g_3^2 = \frac{16^3}{27^3} k^6 (1 - k^2) > 0$$
and its invariants are real. Consequently, $\wp$ has a rectangular period lattice: we may take as fundamental periods $2 \omega = 2 K$ and $2 \omega' = 2 \ii K'$ with $K > 0$ and $K' > 0$; of course, these are also fundamental periods of $\dd$. As period parallelogram, we may take either the rectangle whose vertices are 
$$0, \, 2 K, \, 2K + 2 \ii K', \, 2 \ii K'$$
or the rectangle whose edges have midpoints 
$$\pm \, K, \, \pm \, \ii K'.$$

\medbreak 

The intersection of these two rectangles we shall call the quarter-rectangle $Q$. Along the perimeter of this rectangle the Weierstrass function $\wp$ is real-valued, its values decreasing strictly from $+ \infty$ to $- \infty$ as the perimeter is traced in the order 
$$0 \to K \to K + \ii K' \to \ii K' \to 0.$$ 
Here of course, $0$ represents the pole lattice of $\wp$. The three nonzero vertices represent the three midpoint lattices, where $\wp'$ is zero; the values assumed by $\wp$ at these points satisfy 
$$\wp (K) > \wp(K + \ii K') > \wp(\ii K')$$ 
with $\wp(K)$ positive and $\wp(\ii K')$ negative. 

\medbreak 

In the next three sections, we address three separate structural features of the elliptic function $\dd$: we determine the precise location of its poles; we determine the precise values of $\dd$ at the vertices of the quarter-rectangle $Q$, these vertices representing the lattices where the derivative $\dd'$ vanishes; and we determine the precise values of the fundamental periods $2 K$ and $2 \ii K'$. 

\medbreak 

\section*{Poles}

\medbreak 

In this section, we investigate the poles of $\dd$ and pin down their precise locations. 

\medbreak 

The formula $(1 - \dd)\, (\frac{1}{3} + \wp) = \frac{4}{9} \, \kappa^2$ of Theorem \ref{ddwp} makes it plain that the elliptic function $\dd$ has poles precisely where its coperiodic Weierstrass function $\wp$ assumes the value $- 1/3$; thus our task is clear. 

\medbreak 

We begin our search at the destination, looking among the points of order three modulo periods of $\wp$. Let $z$ be such a point: equivalently, let $- 2 z \equiv z \nequiv 0$ where $\equiv$ signifies congruence modulo periods. In the duplication formula 
$$\wp (2 z) + 2 \wp (z) = \frac{1}{4} \, \frac{\wp''(z)^2}{\wp'(z)^2}$$
for $\wp$, take account of the fact that $\wp$ is even and of the derivative formulae 
$$\wp''(z) = 6 \wp(z)^2 - \frac{1}{2} g_2$$ 
$$\wp'(z)^2 = 4 \wp(z)^3 - g_2 \wp(z) - g_3$$ 
to deduce that 
$$3 \wp(z) = \frac{1}{4} \, \frac{36 \wp(z)^4 - 6 g_2 \wp(z)^2 + \frac{1}{4} g_2^2}{4 \wp(z)^3 - g_2 \wp(z) - g_3}$$
and conclude that $w = \wp(z)$ satisfies the quartic equation 
$$w^4 - \frac{1}{2} g_2 w^2 - g_3 w - \frac{1}{48} g_2^2 = 0.$$ 
This quartic has real coefficients and its discriminant $\Delta$ is given by 
$$27 \Delta = - (g_2^3 - 27 g_3^2)^2 < 0$$
whence it has two real zeros and a conjugate pair of nonreal zeros; moreover, the real zeros have opposite sign since the product of all four zeros is $- g_2^2 / 48 < 0$. 

\medbreak 

The following result was first established using theta functions and conformal mapping theory in [5]; our argument using the coperiodic Weierstrass function is adapted from [4]. 

\medbreak 

\begin{theorem} \label{poles}
The elliptic function $\dd$ has poles at the points $\pm \frac{2}{3} \ii K'.$
\end{theorem}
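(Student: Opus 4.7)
The plan is to show that $-1/3$ appears as one of the values $\wp$ assumes at a 3-torsion point, and then to identify which 3-torsion point by real/imaginary sign analysis along the perimeter of $Q$. By Theorem \ref{ddwp}, the poles of $\dd$ are exactly the points where $\wp = -1/3$, so the task is to solve $\wp(z) = -1/3$.

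First I would verify by direct substitution that $w = -1/3$ satisfies the quartic
$$w^4 - \tfrac{1}{2} g_2 w^2 - g_3 w - \tfrac{1}{48} g_2^2 = 0$$
derived above, using the explicit formulae $g_2 = \tfrac{4}{27}(1 + 8\lambda^2)$ and $g_3 = \tfrac{8}{729}(8\lambda^4 + 20\lambda^2 - 1)$. This is a routine polynomial check: all the powers of $\lambda$ cancel, leaving $0$. This shows any solution to $\wp(z) = -1/3$ lies among the 3-torsion points of the lattice.

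Next I would enumerate the 3-torsion classes. Modulo $\langle 2K, 2\ii K'\rangle$ there are eight nonzero 3-torsion points, grouped into four pairs $\{z, -z\}$; since $\wp$ is even, the associated values of $\wp$ form four numbers, necessarily the four roots of the quartic. Standard representatives are $\pm \tfrac{2K}{3}$, $\pm \tfrac{2\ii K'}{3}$, $\pm(\tfrac{2K}{3} + \tfrac{2\ii K'}{3})$, and $\pm(\tfrac{2K}{3} - \tfrac{2\ii K'}{3})$; the ``real'' property $\overline{\wp(\bar z)} = \wp(z)$ together with the symmetry $\wp(z) = \wp(-z)$ shows that the first two values are real while the last two are complex conjugates of one another, matching exactly the root pattern (two real, two conjugate) forced by the sign of the discriminant.

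Since $-1/3$ is real, it must equal either $\wp(\tfrac{2K}{3})$ or $\wp(\tfrac{2\ii K'}{3})$. The perimeter analysis of $Q$ already recorded in the paper gives $\wp(\tfrac{2K}{3}) > \wp(K) > 0$, whereas $\wp(\tfrac{2\ii K'}{3}) < \wp(\ii K') < 0$; as $-1/3 < 0$ we conclude $\wp(\tfrac{2\ii K'}{3}) = -1/3$. Thus the only poles of $\dd$ in $Q$, and hence (by evenness) the poles in any period parallelogram centred at the origin, are $\pm \tfrac{2\ii K'}{3}$. The only nontrivial step is the opening verification that $-1/3$ satisfies the quartic: the algebra is elementary but one has to be careful tracking the coefficients of $\lambda^2$ and $\lambda^4$, which must cancel identically.
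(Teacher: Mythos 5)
Your proposal is correct and follows essentially the same route as the paper: both use the quartic satisfied by $\wp$ at points of order three, check that $-1/3$ is a root, and use the sign of $\wp$ along the perimeter of $Q$ to single out $\pm\tfrac{2}{3}\ii K'$. The only cosmetic difference is that the paper identifies $\wp(\tfrac{2}{3}\ii K')$ directly as the unique negative root of the quartic, whereas you enumerate all four three-torsion pairs and eliminate the others by realness and sign.
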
 

\begin{proof} 
Let $z$ be one of the two indicated points. As $z$ has order three modulo periods, $w = \wp(z)$ is a zero of the quartic that appears just prior to this Theorem. Recall that the values of $\wp$ are negative along the left-hand edge $(0, \ii K')$ of the quarter-rectangle $Q$. It follows that $\wp(z)$ is the unique negative zero of the quartic 
$$w^4 - \frac{1}{2} g_2 w^2 - g_3 w - \frac{1}{48} g_2^2.$$ 
With $g_2$ and $g_3$ as given in Theorem \ref{ddwp} it is readily checked that this quartic vanishes at $-1/3$. Thus  $\wp(z) = -1/3$ and so $\dd$ has poles at  $\pm \frac{2}{3} \ii K'$ as claimed. 
\end{proof} 

\medbreak 

Each of these poles is simple. This may be seen indirectly: on the one hand, the elliptic function $\dd$ has order two; on the other hand, both of these poles lie in the fundamental rectangle with edge midpoints at $\pm K, \, \pm \ii K'$. It may also be seen directly: for instance, $\wp = -1/3$ implies $\wp' \neq 0$, so the zeros of $\frac{1}{3} + \wp$ are simple. 

\medbreak 

The full list of order-three points in the rectangle having edge midpoints at $\pm K, \, \pm \ii K'$ is 
$$\pm \frac{2}{3} K, \, \pm \frac{2}{3} \ii K', \, \pm \frac{2}{3} (K + \ii K'), \, \pm \frac{2}{3} (K - \ii K').$$
Here, $\wp(\pm \frac{2}{3} K)$ is the positive zero of the quartic in the proof of Theorem \ref{poles} while $\wp(\frac{2}{3} K + \frac{2}{3}\ii K')$ and $\wp(\frac{2}{3} K - \frac{2}{3} \ii K')$ constitute its conjugate pair of nonreal zeros. 

\medbreak 

\section*{Extrema}

\medbreak 

In this section, we make explicit the critical values of $\dd$: that is, the values that $\dd$ assumes at the points at which its derivative is zero. So as not to break the run of one-word section headings (and on the authority of Hille [3] page 284) we call these points extrema of $\dd$. 

\medbreak 

Recall from Theorem \ref{ddwp} the explicit formula for $\dd$ in terms of $\wp$; by differentiation, it follows that 
$$\frac{\dd'}{1 - \dd} = \frac{\wp'}{\frac{1}{3} + \wp}.$$
Consequently, the extrema of $\dd$ are precisely the poles of $\wp$ together with the extrema of $\wp$. The poles of $\wp$ are of course represented by $0$; the corresponding critical value of $\dd$ is $1$. Our task in this section is therefore to evaluate $\dd$ on the midpoint lattices of $\wp$: accordingly, we evaluate $\dd$ at the vertices $K, \, K + \ii K', \, \ii K'$ of the quarter-rectangle $Q$. 

\medbreak 

From the differential equation 
$$(\dd ')^2 = \frac{4}{9} (1 - \dd)\,(\dd^3 + 3 \, \dd^2 - 4 \lambda^2)$$ 
of Theorem \ref{d'}, the critical values in question are therefore the three zeros of the cubic $g$ defined by 
$$g(x) = x^3 + 3 x^2 - 4 \lambda^2.$$ 
We proceed to give concrete expression to these zeros and hence to the critical values of $\dd$. 

\medbreak 

Observe at once that 
$$g(-3) = - 4\lambda^2, \; g(-2) = 4(1 - \lambda^2), \; g(0) = - 4\lambda^2, \; g(1) = 4(1 - \lambda^2).$$
Consequently, the cubic $g$ has three real zeros: following [5] we shall denote these zeros by 
$$x_1 \in (0, 1), \; - x_2 \in (-2, 0), \; - x_3 \in (-3, -2)$$
so that $x_1, x_2, x_3$ are all strictly positive and 
$$-3 < - x_3 < - 2 < - x_2 < 0 < x_1 < 1.$$ 

\medbreak 

The form of the cubic $g$ suggests a trigonometric approach. In the equation 
$$x^3 + 3 x^2 - 4 \lambda^2 = 0$$
let $y = 1/x$ so that 
$$1 + 3 y - 4 \lambda^2 y^3 = 0$$ 
and then multiply throughout by $\lambda$: with $z = \lambda y = \lambda / x$ we arrive at 
$$4 z^3 - 3 z = \lambda.$$ 

\medbreak 

Specify the angle $\aaa \in (0, \pi/6)$ by the condition 
$$\lambda = \cos 3 \aaa$$
or the equivalent condition that $\sin 3 \aaa$ equal the modulus $\kappa$. 

\medbreak 

One root of the cubic equation 
$$4 z^3 - 3 z = \cos 3 \aaa$$ 
is of course $z = \cos \aaa$; from $x = \lambda / z$ we deduce the following explicit formulae for the positive critical value $x_1$. 

\medbreak 

\begin{theorem} \label{x1}
The positive critical value of $\dd$ is 
$$\dd(K) = x_1 = 4 \cos^2 \aaa - 3 = 2 \cos 2 \aaa - 1.$$ 
\end{theorem}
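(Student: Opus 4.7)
The plan is to identify $\dd(K)$ by following $\dd$ along the real segment $[0, K]$, then to express the resulting critical value in closed form via the angle $\aaa$.

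First I would observe that $\dd'(K) = 0$ by the standard parity argument: since $\dd$ is even and has period $2K$, the derivative $\dd'$ is odd and $2K$-periodic, so $\dd'(K) = -\dd'(-K) = -\dd'(K)$. The differential equation of Theorem \ref{d'} then forces $\dd(K)$ into the root-set $\{1, x_1, -x_2, -x_3\}$ of $(1 - X) g(X)$. To rule out all candidates but $x_1$, I would track $\dd$ along $[0, K]$: from the quarter-rectangle discussion, $\wp$ is real-valued and strictly decreasing from $+ \infty$ to the positive value $\wp(K)$ on this segment, so the explicit formula of Theorem \ref{ddwp} exhibits $\dd$ as real and strictly decreasing from $\dd(0) = 1$; in particular $\dd(K) < 1$ is finite, whence $\dd(K) \in \{x_1, -x_2, -x_3\}$.

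Next I would pin down $\dd(K) = x_1$ by a sign argument. Since the cubic $g$ has roots $-x_3 < -x_2 < x_1$ and positive leading coefficient, $g$ is negative on $(-x_2, x_1)$; together with $1 - X > 0$ there, this makes $(1 - X) g(X)$ negative throughout $(-x_2, x_1)$. As $\dd'$ is real along $(0, K)$, the identity $(\dd')^2 = \tfrac{4}{9}(1 - \dd) g(\dd)$ forbids $\dd$ from entering the interval $(-x_2, x_1)$; the strictly decreasing real function $\dd|_{[0, K]}$ must therefore halt at its first permissible critical value below $1$, namely $x_1$.

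Finally I would recover the explicit formulae from the substitution $z = \lambda / x$ already in place, which transforms $g(x) = 0$ into $4 z^3 - 3 z = \lambda = \cos 3 \aaa$. Taking the root $z = \cos \aaa$ and reversing the substitution yields
$$x_1 = \frac{\lambda}{\cos \aaa} = \frac{\cos 3 \aaa}{\cos \aaa} = 4 \cos^2 \aaa - 3 = 2 \cos 2 \aaa - 1,$$
the last two equalities coming from the triple-angle formula $\cos 3 \aaa = \cos \aaa \, (4 \cos^2 \aaa - 3)$ and the usual double-angle identity. That this value lies in $(0, 1)$ follows immediately from $\aaa \in (0, \pi / 6)$, confirming that $z = \cos \aaa$ is indeed the root of $4 z^3 - 3 z = \cos 3 \aaa$ corresponding to $x_1$ rather than to $-x_2$ or $-x_3$. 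The only delicate step is the sign analysis ruling out the negative candidates; everything else is mechanical from earlier theorems and elementary trigonometry.
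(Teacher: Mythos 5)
Your proposal is correct, and its final step --- the substitution $z = \cos\aaa$ into $x = \lambda/z$ with $\lambda = \cos 3\aaa = (4\cos^2\aaa - 3)\cos\aaa = (2\cos 2\aaa - 1)\cos\aaa$ --- is exactly the paper's proof, which consists of nothing more than this triplication/duplication computation. Where you differ is in how you justify the matching $\dd(K) = x_1$, i.e.\ that the critical value at $K$ is the \emph{positive} root of $g$. The paper does not argue this inside the proof at all: it is left implicit in the earlier structural discussion, where $\wp$ decreases from $+\infty$ to $-\infty$ around the perimeter of $Q$, so that $\wp(K) > \wp(K + \ii K') > \wp(\ii K') > -\tfrac{1}{3}$ and the formula $\dd = 1 - \tfrac{4}{9}\kappa^2/(\tfrac{1}{3} + \wp)$, being increasing in $\wp$ where $\tfrac{1}{3} + \wp > 0$, transfers this ordering to $\dd(K) > \dd(K+\ii K') > \dd(\ii K')$, forcing $\dd(K)$ to be the largest root $x_1$. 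You instead give a self-contained forbidden-interval argument: $\dd$ decreases from $1$ along $[0,K]$, and since $(1-X)g(X) < 0$ on $(-x_2, x_1)$ while $(\dd')^2 \geq 0$ on the real axis, $\dd$ cannot cross below $x_1$, so $\dd(K) = x_1$. Both routes are valid; yours makes explicit a step the paper glosses over (and likewise pins down, via $x_1 = 4\cos^2\aaa - 3 \in (0,1)$ for $\aaa \in (0,\pi/6)$, why the root $z = \cos\aaa$ is the one corresponding to $x_1$), at the cost of some extra machinery that the paper's global picture of $\wp$ on $Q$ renders unnecessary.
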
 

\begin{proof} 
Trigonometric triplication and duplication: substitution into $x = \lambda / z$ of $z = \cos \aaa$ and 
$$\lambda = \cos 3 \aaa = 4 \cos^3 \aaa - 3 \cos \aaa = (4 \cos^2 \aaa - 3) \cos \aaa = (2 \cos 2 \aaa - 1) \cos \aaa.$$
\end{proof} 

\medbreak 

The other two roots of the cubic in $z$ are $\cos (\aaa \pm 2 \pi/3)$; these lead to the following formulae for the negative critical values of $\dd$. 

\medbreak 

\begin{theorem} \label{x23}
The negative critical values $\dd(K + \ii K') = - x_2$ and $\dd(\ii K') = - x_3$ of $\dd$ are given by 
$$x_2 = 1 + 2 \cos (2 \aaa + \pi/3) = 1 + \cos 2 \aaa - \sqrt{3} \sin 2 \aaa$$
and 
$$x_3 = 1 + 2 \cos(2\aaa - \pi/3) = 1 + \cos 2 \aaa + \sqrt{3} \sin 2 \aaa.$$ 
\end{theorem}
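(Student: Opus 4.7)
The plan is to reuse the substitution $z = \lambda/x$ from the proof of Theorem \ref{x1}, now applied to the remaining two roots $\cos(\aaa \pm 2\pi/3)$ of the cubic $4z^3 - 3z = \cos 3\aaa$. Since the substitution carries the three zeros of $g$ bijectively onto the three roots of the $z$-cubic (none of which vanishes for $\aaa \in (0, \pi/6)$), and since $\cos \aaa$ was already accounted for by $x_1$, these two remaining roots must correspond, in some order, to the critical values $-x_2$ and $-x_3$.

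For each sign, the same triplication-plus-duplication manipulation as in Theorem \ref{x1} — now applied with argument $\aaa \pm 2\pi/3$, which is legitimate because $3(\aaa \pm 2\pi/3)$ differs from $3\aaa$ by an integer multiple of $2\pi$ — yields $x = 4\cos^2(\aaa \pm 2\pi/3) - 3 = 2\cos(2\aaa \pm 4\pi/3) - 1$. The reduction $\cos(\theta \pm 4\pi/3) = -\cos(\theta \pm \pi/3)$ then gives $-x = 1 + 2\cos(2\aaa \pm \pi/3)$, so the two candidates for $x_2$ and $x_3$ are $1 + 2\cos(2\aaa \pm \pi/3)$. A quick range check for $\aaa \in (0, \pi/6)$ places $1 + 2\cos(2\aaa + \pi/3) \in (0, 2)$ and $1 + 2\cos(2\aaa - \pi/3) \in (2, 3)$, matching the intervals specified just prior to the theorem; this identifies the values of $x_2$ and $x_3$.

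To match these to the specific vertices $K + \ii K'$ and $\ii K'$, I would differentiate $\dd = 1 - (4\kappa^2/9)/(1/3 + \wp)$ to note that $\dd$ is an increasing function of $\wp$ on $\{1/3 + \wp > 0\}$. Both critical values in question lie in $[-3, 0]$, so $1 - \dd > 0$ at these vertices, forcing $1/3 + \wp > 0$ there; the inequality $\wp(K + \ii K') > \wp(\ii K')$ recorded in the introductory discussion of $Q$ then yields $\dd(K + \ii K') > \dd(\ii K')$, hence $\dd(K + \ii K') = -x_2$ and $\dd(\ii K') = -x_3$. The second equalities of the theorem are immediate from the angle-addition identity $2\cos(2\aaa \pm \pi/3) = \cos 2\aaa \mp \sqrt{3}\sin 2\aaa$.

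The trigonometric manipulations are entirely routine; the only genuinely subtle point — and the step I would expect to require the most care — is the vertex-matching, which converts an order-relation on $\wp$ into one on $\dd$ via the monotonicity of $\wp \mapsto \dd$ on the appropriate region.
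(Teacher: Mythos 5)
Your proposal is correct and follows essentially the same route as the paper: both recover $-x_2$ and $-x_3$ from the remaining roots $\cos(\aaa \pm 2\pi/3)$ of the cubic $4z^3 - 3z = \cos 3\aaa$ via $x = \lambda/z$, differing only in that you reuse the triplication identity at the shifted angle (exploiting $\cos 3(\aaa \pm 2\pi/3) = \cos 3\aaa$) where the paper uses a sum-to-product manipulation. Your added range check and the monotonicity argument matching the two values to the vertices $K + \ii K'$ and $\ii K'$ merely supply details the paper leaves implicit.
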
 

\begin{proof} 
From the standard trigonometric formula 
$$\cos A + \cos B = 2 \cos \tfrac{1}{2} (A + B) \, \cos \tfrac{1}{2} (A - B)$$ 
it follows that 
$$\cos 3 \aaa + \cos (\aaa - 2 \pi / 3) = 2 \cos (2 \aaa - \pi / 3) \, \cos (\aaa + \pi / 3)$$ 
while 
$$\cos (\aaa + \pi / 3) = - \cos (\aaa - 2 \pi / 3)$$ 
so that 
$$\cos 3 \aaa = - \cos (\aaa - 2 \pi / 3) \, \big[ 1 + 2 \cos (2 \aaa - \pi / 3) \big].$$ 
Hence $x_3$; likewise $x_2$. 
\end{proof} 

\medbreak

As a bonus, we may now derive corresponding formulae for the midpoint values
$$\wp(K) > \wp(K + \ii K') > \wp(\ii K')$$
 of the coperiodic Weierstrass function $\wp$, in the same trigonometric terms. 

\medbreak 

\begin{theorem} \label{wp}
The midpoint values of $\wp$ are given by 
$$\tfrac{1}{3} + \wp (K) = \tfrac{\kappa^2}{9} \csc^2 \aaa$$
$$\tfrac{1}{3} + \wp (K + \ii K')) = \tfrac{\kappa^2}{9} \sec^2 (\aaa + \tfrac{\pi}{6})$$
$$\tfrac{1}{3} + \wp (\ii K') = \tfrac{\kappa^2}{9} \sec^2 (\aaa - \tfrac{\pi}{6}).$$
\end{theorem}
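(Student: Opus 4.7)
The plan is to read the midpoint values of $\wp$ off the critical values of $\dd$ that were already computed in Theorems \ref{x1} and \ref{x23}, using the algebraic relation between $\dd$ and $\wp$ from Theorem \ref{ddwp}.

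First I rearrange $(1-\dd)(\tfrac{1}{3}+\wp) = \tfrac{4}{9}\kappa^2$ as
$$\tfrac{1}{3}+\wp(z) \;=\; \frac{4\kappa^2/9}{1-\dd(z)}$$
at any point $z$ where $\dd(z) \ne 1$. Each of the three vertices $K$, $K+\ii K'$, $\ii K'$ of the quarter-rectangle $Q$ qualifies, since the values of $\dd$ there are $x_1, -x_2, -x_3$ respectively and none of these equals $1$.

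Next I substitute the trigonometric formulae from Theorems \ref{x1} and \ref{x23} and simplify using the half-angle identity $1 + \cos\theta = 2\cos^2(\theta/2)$. For the vertex $K$: $1 - \dd(K) = 1 - (2\cos 2\aaa - 1) = 2(1 - \cos 2\aaa) = 4\sin^2\aaa$, which immediately produces the first formula. For the vertex $K + \ii K'$: $1 - \dd(K + \ii K') = 1 + x_2 = 2 + 2\cos(2\aaa + \pi/3) = 4\cos^2(\aaa + \pi/6)$, giving the second formula. For the vertex $\ii K'$: $1 - \dd(\ii K') = 1 + x_3 = 2 + 2\cos(2\aaa - \pi/3) = 4\cos^2(\aaa - \pi/6)$, giving the third.

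There is no real obstacle here; the work was done in the previous two theorems, and the only step is a half-angle simplification. The slight bookkeeping point worth noting is the assignment of the three negative critical values to the correct vertices --- that $\dd(K+\ii K') = -x_2$ and $\dd(\ii K') = -x_3$ rather than the other way around --- which is already fixed by Theorem \ref{x23} via the ordering $\wp(K) > \wp(K+\ii K') > \wp(\ii K')$ established at the end of the Ellipticity section.
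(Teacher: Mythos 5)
Your proposal is correct and follows the same route as the paper's own proof: invert the relation $(1-\dd)(\tfrac{1}{3}+\wp)=\tfrac{4}{9}\kappa^2$ at each vertex, insert the critical values from Theorems \ref{x1} and \ref{x23}, and simplify $1-x_1$, $1+x_2$, $1+x_3$ by the half-angle (duplication) identity. The only difference is that you write out all three verifications where the paper does one and says ``arguing similarly.''
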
 

\begin{proof} 
Into Theorem \ref{ddwp} substitute the results of Theorem \ref{x1} and Theorem \ref{x23}, using $\dd(K) = x_1$, $\dd(K + \ii K') = - x_2$ and $\dd(\ii K') = - x_3$. For $\wp(K)$ argue by trigonometric duplication that  
$$1 - \dd(K) = 1 - x_1 = 2 - 2 \cos 2 \aaa = 4 \sin^2 \aaa,$$
arguing similarly for $\wp(K + \ii K')$ and $\wp(K')$. 
\end{proof} 

\medbreak 

Here, recall that $\kappa = \sin 3 \aaa$; so, for example,  
$$\tfrac{1}{3} + \wp (K) = \tfrac{1}{9} (3 - 4 \sin^2 \aaa)^2.$$

\medbreak 

\medbreak 

\section*{Periods}

\medbreak 

In this section, we make explicit the fundamental periods $2 K$ and $2 \ii K'$ of $\dd$ and of $\wp$: we render them first in terms of the positive real numbers $x_1, x_2, x_3$ and thereafter in terms of the angle $\aaa$. 

\medbreak 

We shall make free use of the formulae recorded by Greenhill: for ease of reference, we present the results of cases (ii) and (iii) in Section 66 of [2] with minor modifications. 

\medbreak 

Let the quartic 
$$X = (x - \alpha)(x - \beta)(x - \gamma)(x - \delta)$$
have real zeros 
$$\alpha > \beta > \gamma > \delta.$$ 
Write 
$$M = \tfrac{1}{2} \sqrt{(\alpha - \gamma)(\beta - \delta)};$$
also write 
$$k = \sqrt{\frac{(\alpha - \beta)(\gamma - \delta)}{(\alpha - \gamma)(\beta - \delta)}}$$
and write 
$$k' = \sqrt{\frac{(\beta - \gamma)(\alpha - \delta)}{(\alpha - \gamma)(\beta - \delta)}}$$
noting that 
$$k^2 + k'^{\,2} = 1.$$

\medbreak 

Case (ii) in Section 66 of [2] asserts that if $\alpha > x > \beta$ then 
$$\int_{\beta}^x \frac{M {\rm d}x}{\sqrt{ - X}} = {\rm sn}^{-1} \sqrt{\frac{(\alpha - \gamma)(x - \beta)}{(\alpha - \beta)(x - \gamma)}}$$
where ${\rm sn} = {\rm sn}(\bullet ; k)$ is the Jacobian `sinus amplitudinus' with modulus $k$; in particular, 
$$\int_{\beta}^{\alpha} \frac{M {\rm d}x}{\sqrt{ - X}} = {\rm sn}^{-1} 1 = K(k)$$
where 
$$K(k) = \tfrac{1}{2} \pi F(\tfrac{1}{2}, \tfrac{1}{2}; 1; k^2)$$
is the corresponding complete elliptic integral. 

\medbreak 

Case (iii) in Section 66 of [2] asserts that if $\beta > x > \gamma$ then 
$$\int_x^{\beta} \frac{M {\rm d}x}{\sqrt{X}} = {\rm sn}^{-1} \sqrt{\frac{(\alpha - \gamma)(\beta - x)}{(\beta - \gamma)(\alpha - x)}}$$
where ${\rm sn} = {\rm sn}(\bullet ; k')$ is the Jacobian `sinus amplitudinus' with modulus $k'$; in particular, 
$$\int_{\gamma}^{\beta} \frac{M {\rm d}x}{\sqrt{X}} = {\rm sn}^{-1} 1 = K(k')$$
where 
$$K(k') = \tfrac{1}{2} \pi F(\tfrac{1}{2}, \tfrac{1}{2}; 1; k'{\,^2})$$
is the corresponding complete elliptic integral. 

\medbreak 

Recall from Theorem \ref{d'} that $\dd$ satisfies the differential equation 
$$(\dd ')^2 = \frac{4}{9} (1 - \dd)\,(\dd^3 + 3 \, \dd^2 - 4 \lambda^2).$$ 
Accordingly, we apply the foregoing results to the quartic $X$ for which 
$$\alpha = 1, \, \beta = x_1, \, \gamma = - x_2, \, \delta = - x_3$$
so that 
$$M = \tfrac{1}{2} \sqrt{(1 + x_2)(x_3 + x_1)}$$
along with 
$$k^2 = \frac{(1 - x_1) (x_3 - x_2)}{(1 + x_2)(x_3 + x_1)}$$
and 
$$k'{\,^2} = \frac{(1 + x_3) (x_1 + x_2)}{(1 + x_2)(x_1 + x_3)}\, .$$
\medbreak 
\noindent 
Here, note that both $0 < k < 1$ and $0 < k' < 1$: this is at once clear for $k$; for $k'$ it follows from the identity $k^2 + k'^{\, 2} = 1$. 

\medbreak 

Substitution of $x_1$, $x_2$ and $x_3$ from Theorem \ref{x1} and Theorem \ref{x23} yields the explicit formula 
$$M^2 = 2 \sqrt3 \cos^2 (\aaa + \pi / 6) \cos(2 \aaa - \pi / 6).$$

\medbreak 

\begin{theorem} \label{K} 
The fundamental period $2 K$ of $\dd$ is given by 
$$2 K = \frac{3 \pi}{2 M} F(\tfrac{1}{2}, \tfrac{1}{2}; 1; k^2)$$
where 
$$k^2 = \frac{\sin^2 \aaa \sin 2 \aaa}{\cos^2 (\aaa + \pi / 6) \cos(2 \aaa - \pi / 6)}\,.$$
\end{theorem}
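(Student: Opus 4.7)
The plan is to factor the quartic appearing in Theorem \ref{d'} so that Greenhill's case (ii) applies directly on the segment where $\dd$ is real and monotone. Writing $X(x) = (x-1)(x-x_1)(x+x_2)(x+x_3)$, the cubic factorization $x^3 + 3 x^2 - 4 \lambda^2 = (x - x_1)(x + x_2)(x + x_3)$ recasts the differential equation of Theorem \ref{d'} in the form $(\dd')^2 = - \tfrac{4}{9}\, X(\dd)$. With the identifications $\alpha = 1, \beta = x_1, \gamma = -x_2, \delta = -x_3$, Theorems \ref{x1} and \ref{x23} secure the Greenhill ordering $\alpha > \beta > \gamma > \delta$, and the recipe $M^2 = \tfrac{1}{4}(1+x_2)(x_1+x_3)$ reproduces the expression for $M^2$ recorded just before the statement.

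The next step is to read off the behaviour of $\dd$ on the real segment $[0, K]$ from Theorem \ref{ddwp}: since $\wp$ decreases strictly from $+\infty$ to $\wp(K)$ along this segment, the formula $\dd = 1 - (\tfrac{4}{9} \kappa^2)/(\tfrac{1}{3} + \wp)$ forces $\dd$ to decrease strictly from $1$ to $x_1$, with $\dd'$ negative on the open segment. Separation of variables in $(\dd')^2 = -\tfrac{4}{9} X(\dd)$ and integration from $0$ to $K$ then give
$$K = \tfrac{3}{2} \int_{x_1}^{1} \frac{{\rm d}x}{\sqrt{-X(x)}},$$
and Greenhill's case (ii), applied on the range $\beta < x < \alpha$, converts the integral directly into $K(k)/M$ with the Greenhill modulus $k$. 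Writing $K(k) = \tfrac{\pi}{2} F(\tfrac12, \tfrac12; 1; k^2)$ then produces the announced formula for $2K$.

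It remains to reconcile the algebraic modulus $k^2 = (1 - x_1)(x_3 - x_2)/[(1 + x_2)(x_1 + x_3)]$ with the trigonometric form asserted. Theorem \ref{x1} supplies $1 - x_1 = 4 \sin^2 \aaa$ by duplication, and Theorem \ref{x23} supplies $x_3 - x_2 = 2 \sqrt 3 \sin 2 \aaa$ by direct subtraction. For the denominator I would use the half-angle identity $1 + x_2 = 2 + 2 \cos(2 \aaa + \pi/3) = 4 \cos^2(\aaa + \pi/6)$ and the auxiliary-angle regrouping $x_1 + x_3 = 3 \cos 2 \aaa + \sqrt 3 \sin 2 \aaa = 2 \sqrt 3 \cos(2 \aaa - \pi/6)$. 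Cancelling the common factor of $8 \sqrt 3$ between numerator and denominator then delivers $k^2$ in the form stated.

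The main obstacle is the trigonometric tidying in the third step: the substitutions from Theorems \ref{x1} and \ref{x23} must be steered into the half-angle form for $1 + x_2$ and the auxiliary-angle form $2 \aaa - \pi/6$ for $x_1 + x_3$ in just the right way so that the two $\sqrt 3$ factors combine cleanly; otherwise the argument is a routine application of Greenhill's case (ii).
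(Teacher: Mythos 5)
Your proposal is correct and follows essentially the same route as the paper: separate variables in the differential equation of Theorem \ref{d'} using the monotonicity of $\dd$ on $(0,K)$, obtain $K = \tfrac{3}{2}\int_{x_1}^{1}\bigl((1-x)(x^3+3x^2-4\lambda^2)\bigr)^{-1/2}\,{\rm d}x$, apply Greenhill's case (ii) with $\alpha = 1$, $\beta = x_1$, $\gamma = -x_2$, $\delta = -x_3$, and then substitute the trigonometric forms of $x_1, x_2, x_3$ from Theorems \ref{x1} and \ref{x23}. Your trigonometric reductions ($1-x_1 = 4\sin^2\aaa$, $1+x_2 = 4\cos^2(\aaa+\pi/6)$, $x_3-x_2 = 2\sqrt3\sin 2\aaa$, $x_1+x_3 = 2\sqrt3\cos(2\aaa-\pi/6)$) are exactly the ones the paper uses.
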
 

\begin{proof} 
Integrate along the lower edge of the quarter-rectangle $Q$. As $\dd = 1 - \frac{4}{9} \kappa^2 / (\frac{1}{3} + \wp)$ decreases along the interval $(0, K)$ and satisfies the differential equation 
$$(\dd ')^2 = \frac{4}{9} (1 - \dd)\,(\dd^3 + 3 \, \dd^2 - 4 \lambda^2)$$ 
it follows that  
$$\dd' = - \frac{2}{3} \sqrt{(1 - \dd)\,(\dd^3 + 3 \, \dd^2 - 4 \lambda^2)}$$
along the same interval, whence by integration 
$$- \frac{3}{2} \int_{\dd(0)}^{\dd(K)} \frac{{\rm d} x}{\sqrt{(1 - x)(x^3 + 3 x^2 - 4 \lambda^2)}} = \int_0^K 1$$
or 
$$K = \frac{3}{2} \int_{x_1}^1 \frac{{\rm d} x}{\sqrt{(1 - x)(x^3 + 3 x^2 - 4 \lambda^2)}}\,.$$ 
Now case (ii) in Section 66 of [2] applies. To complete the argument, note that 
$$1 - x_1 = 2 - 2 \cos 2 \aaa = 4 \sin^2 \aaa$$
and 
$$1 + x_2 = 2 + 2 \cos (2 \aaa + \pi / 3) = 4 \cos^2 (\aaa + \pi / 6)$$ 
whence 
$$\frac{1 - x_1}{1 + x_2} = \frac{\sin^2 \aaa}{\cos^2 (\aaa + \pi / 6)}$$
while similar arguments yield 
$$\frac{x_3 - x_2}{x_3 + x_1} = \frac{\sin 2 \aaa}{\cos (2 \aaa - \pi / 6)}.$$
\end{proof} 

\medbreak 

To determine the value of $K'$ we integrate up the right edge of the quarter-rectangle $Q$. 

\medbreak 

\begin{theorem} \label{K'}
The fundamental period $2 K'$ of $\dd$ is given by 
$$2 K' = \frac{3 \pi}{2M} F(\tfrac{1}{2}, \tfrac{1}{2}; 1; k'{\,^2})$$
where 
$$k'^{\, 2} = \frac{\cos^2 (\aaa - \pi / 6)\cos ( 2 \aaa + \pi / 6)}{\cos^2 (\aaa + \pi / 6) \cos(2 \aaa - \pi / 6)}\,.$$
\end{theorem}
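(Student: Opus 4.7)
The plan is to parallel the argument of Theorem \ref{K} but to integrate up the right edge of the quarter-rectangle $Q$ rather than along its lower edge. Parametrize the right edge by $z = K + \ii t$ with $t \in [0, K']$ and define $g(t) = \dd(K + \ii t)$. Because $\dd$ is `real', it takes real values on the vertical line through $K$ (a line of symmetry by periodicity and reality), so $g$ is real-valued; as $t$ runs from $0$ to $K'$, the coperiodic Weierstrass function $\wp$ decreases monotonically from $\wp(K)$ to $\wp(K + \ii K')$ (by the monotonicity assertion recalled from the discussion of the perimeter of $Q$), and since both values $\tfrac{1}{3} + \wp(K)$ and $\tfrac{1}{3} + \wp(K + \ii K')$ are positive by Theorem \ref{wp}, the relation $\dd = 1 - \tfrac{4}{9}\kappa^2/(\tfrac{1}{3} + \wp)$ forces $g$ to decrease monotonically from $x_1$ to $-x_2$.

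Next I would convert the differential equation of Theorem \ref{d'} into one for $g$. Since $g'(t) = \ii \, \dd'(K + \ii t)$, squaring gives $(g')^2 = - (\dd')^2 = \tfrac{4}{9}(1 - g)(4\lambda^2 - g^3 - 3 g^2)$. In the range $g \in (-x_2, x_1)$ the right-hand side is nonnegative, and $g$ is decreasing, so
$$g'(t) = -\tfrac{2}{3}\sqrt{(1 - g)(4\lambda^2 - g^3 - 3 g^2)}.$$
Separating variables and integrating over $t \in [0, K']$ produces
$$K' = \tfrac{3}{2} \int_{-x_2}^{x_1} \frac{{\rm d}x}{\sqrt{(1 - x)(x_1 - x)(x + x_2)(x + x_3)}}$$
after factorizing $4\lambda^2 - x^3 - 3x^2 = (x_1 - x)(x + x_2)(x + x_3)$. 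This integral is precisely of the shape handled by case (iii) of Section 66 of [2], applied to the quartic with $\alpha = 1$, $\beta = x_1$, $\gamma = - x_2$, $\delta = - x_3$; the case-(iii) formula for $\int_\gamma^\beta M\,{\rm d}x/\sqrt{X}$ gives $K' = \tfrac{3}{2M} K(k')$, which rearranges to the claimed expression for $2 K'$.

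It remains to recognize the resulting
$$k'^{\,2} = \frac{(\beta - \gamma)(\alpha - \delta)}{(\alpha - \gamma)(\beta - \delta)} = \frac{(x_1 + x_2)(1 + x_3)}{(1 + x_2)(x_1 + x_3)}$$
in the form asserted. For this, I would insert the expressions for $x_1, x_2, x_3$ from Theorems \ref{x1} and \ref{x23} and simplify each factor separately: the sum-to-product identity gives $x_1 + x_2 = 2\sqrt{3}\,\cos(2\aaa + \pi/6)$ and $x_1 + x_3 = 2\sqrt{3}\,\cos(2\aaa - \pi/6)$, while the half-angle identity gives $1 + x_2 = 4\cos^2(\aaa + \pi/6)$ and $1 + x_3 = 4\cos^2(\aaa - \pi/6)$. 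Assembling these four factors yields the stated trigonometric form of $k'^{\,2}$.

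The main obstacle is really a cluster of small but easily-bungled checks at the start: verifying that $\dd$ actually runs through real values along the right edge of $Q$ (and that it decreases rather than increases), and picking the correct sign for $g'(t)$ so that the resulting integral is positive and matches Greenhill's case (iii) as written. The final trigonometric simplification of $k'^{\,2}$ is mechanical once the four factors $x_1 \pm x_j$, $1 + x_j$ have been put into product form.
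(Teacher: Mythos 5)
Your proposal is correct and follows essentially the same route as the paper: the paper likewise sets ${\rm d}(y) = \dd(K + \ii y)$, derives $({\rm d}')^2 = \tfrac{4}{9}(1-{\rm d})(4\lambda^2 - 3{\rm d}^2 - {\rm d}^3)$ with the negative square root, integrates to get $K' = \tfrac{3}{2}\int_{-x_2}^{x_1}\frac{{\rm d}x}{\sqrt{(1-x)(4\lambda^2 - 3x^2 - x^3)}}$, and invokes Greenhill's case (iii); your explicit verification of monotonicity, the sign of $g'$, and the four trigonometric product forms simply fills in details the paper leaves to the reader with ``Similar to that for Theorem \ref{K}.''
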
 

\begin{proof} 
Similar to that for Theorem \ref{K}. Define a new function ${\rm d}$ by 
$${\rm d}(y) = \dd (K + \ii y)$$
so that ${\rm d}$ satisfies the differential equation 
$$({\rm d}')^2 = \frac{4}{9}  (1 - {\rm d})(4 \lambda^2 - 3 \, \dd^2 - \dd^3)$$
whence  
$${\rm d}' = - \frac{2}{3} \sqrt{(1 - {\rm d})(4 \lambda^2 - 3 \, \dd^2 - \dd^3)}$$ 
along the interval $(0, K')$ and integration yields 
$$K' = \frac{3}{2} \int_{- x_2}^{x_1} \frac{{\rm d} x}{\sqrt{(1 - x)(4 \lambda^2 - 3 x^2 - x^3)}}\, .$$
Apply case (iii) in Section 66 of [2] to complete the argument. 
\end{proof} 

\medbreak 

It is perhaps worth observing that integration up the left edge of the quarter-rectangle $Q$ encounters the pole of $\dd$ at $\tfrac{2}{3} \ii K'$.  Again, [2] Section 66 applies: integration from $0$ up to the pole uses case (i) while integration from the pole up to $\ii K'$ uses case (v); the resulting expressions for $2 K'$ do not present themselves as complete elliptic integrals. 

\medbreak 

Incidentally, we may also locate the zeros of $\dd$: in fact, the zero $K + \ii L'$ that lies on the right edge of the quarter-rectangle $Q$ may be found by integration as in the proof of Theorem \ref{K'} but along $(0, L')$; again, the expression that results is not a complete elliptic integral. 

\medbreak 

To close this section, we record the value of the quotient $K' / K$ in terms of the complementary moduli $k$ and $k'$.

\medbreak 

\begin{theorem} \label{K/K'}
$$\frac{K'}{K} = \frac{F(\tfrac{1}{2}, \tfrac{1}{2}; 1; k'^{\, 2})}{F(\tfrac{1}{2}, \tfrac{1}{2}; 1; k^2)} = \frac{F(\tfrac{1}{2}, \tfrac{1}{2}; 1; 1 - k^2)}{F(\tfrac{1}{2}, \tfrac{1}{2}; 1; k^2)} \,.$$ 
\end{theorem}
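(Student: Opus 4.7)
The proof is almost immediate from the two preceding theorems, so the main task is essentially bookkeeping rather than new analysis. The plan is as follows.

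First I would observe that Theorem \ref{K} and Theorem \ref{K'} both express the fundamental half-periods with the same prefactor $3\pi/(2M)$: this common $M$ arises from the quartic $X$ in the Greenhill setup, whose zeros $1, x_1, -x_2, -x_3$ are fixed, so $M = \tfrac{1}{2}\sqrt{(1+x_2)(x_3+x_1)}$ is the same quantity entering the formula for $2K$ and the formula for $2K'$. Consequently, dividing the expression for $2K'$ by the expression for $2K$ cancels the factor $3\pi/(2M)$ and yields
$$\frac{K'}{K} = \frac{F(\tfrac{1}{2},\tfrac{1}{2};1;k'^{\,2})}{F(\tfrac{1}{2},\tfrac{1}{2};1;k^2)},$$
which is the first of the two claimed equalities.

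Second, for the remaining equality I would invoke the complementarity $k^2 + k'^{\,2} = 1$ that was recorded in the Greenhill summary preceding Theorem \ref{K}; this rewrites $k'^{\,2}$ as $1 - k^2$ inside the hypergeometric function in the numerator, giving the second form of the ratio.

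There is essentially no obstacle here: the result is a formal consequence of Theorems \ref{K} and \ref{K'}. The only point requiring a moment's care is to confirm that the $M$, $k$, $k'$ appearing in the statements of those two theorems really do come from a single application of the Greenhill quartic $X$ with the same ordering $\alpha > \beta > \gamma > \delta = 1 > x_1 > -x_2 > -x_3$, so that the $k$ of Theorem \ref{K} is genuinely complementary to the $k'$ of Theorem \ref{K'}. Once this is pointed out, the proof is complete.
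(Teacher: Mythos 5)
Your proposal is correct and is essentially the paper's own argument: the proof there consists precisely of noting that Theorems \ref{K} and \ref{K'} involve the same constant $M$, so the prefactors cancel in the quotient, with $k^2 + k'^{\,2} = 1$ giving the second equality. Your added care about the single Greenhill quartic underlying both theorems is sound but not a different route.
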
 

\begin{proof} 
Theorem \ref{K} and Theorem \ref{K'} involve the same constant $M$.
\end{proof} 

\medbreak 

\section*{Hypergeometrics} 

\medbreak 

Here we shall see that a curious hypergeometric identity falls out of our analysis. 

\medbreak 

Note first that the function $\sss = \sin \circ \phi$ is not the restriction of an elliptic function, but its square is such a restriction: in fact, the equation 
$$4 \kappa^2 \sss^2 = (1 - \dd)(2 + \dd)^2$$
from Theorem \ref{Py} makes it plain that $\sss^2$ extends to (or abusively `is') an elliptic function, with third-order poles where $\dd$ has its poles. 

\medbreak 

The zeros of $\sss^2$ come in two varieties: the zeros of $1 - \dd$ and the zeros of $2 + \dd$. The zeros of $1 - \dd$ coincide with the (double) poles of $\wp$ at the points congruent to $0$ modulo periods, according to Theorem \ref{ddwp}; of course, these zeros of $\sss^2$ are expected. The zeros of $2 + \dd$ fall into two congruence classes: $\dd$ takes the value $-2$ at a point $z_0$ on the upper edge of the quarter-rectangle $Q$ because $- x_3 < - 2 < - x_2$; the zeros of $2 + \dd$ are then the points congruent to $\pm z_0$ modulo periods. Each of these zeros of $\sss^2$ has even order. 

\medbreak 

Now, on account of Theorem \ref{poles}, the open band 
$$B = \{ z \in \mathbb{C}: |{\rm Im} z | < \tfrac{2}{3} K' \}$$
about the real axis excludes the poles of $\sss^2$. Within $B$,  the function $\sss^2$ is holomorphic and has even-order zeros (at the multiples of $2 K$) so $\sss^2$ has two holomorphic square-roots: one of these square-roots extends the original $\sss = \sin \circ \phi$ and will be called simply $\sss$; the other square-root is the negative of this one. 

\medbreak 

Now restrict attention to $\sss$ along the real line. We claim that $\sss (K) = 1$: in fact, 
$$4 (1 - \kappa^2 \sss^2) = \dd^3 + 3 \, \dd^2$$
and the positive critical value $\dd(K) = x_1$ satisfies 
$$\dd(K)^3 + 3 \, \dd(K)^2 = 4 \lambda^2$$ 
so that 
$$1 - \kappa^2 \sss(K)^2 = \lambda^2 = 1 - \kappa^2$$
and therefore $\sss(K)^2 = 1$; as $\sss'(0) = \ccc(0) \dd(0) = 1 > 0$ and $\sss$ has $2 K$ as its first positive zero, we conclude that $\sss (K) = 1$ as claimed. 

\medbreak 

More: on the interval $(0, K)$ the function $\dd$ is positive and strictly decreasing, whence $\dd^3 + 3 \, \dd^2$ exceeds the value $4 \lambda^2$ that it achieves at $K$; on the same interval, it follows that $4 (1 - \kappa^2 \sss^2) > 4 (1 - \kappa^2)$ whence $\sss^2 < 1$. In particular, $\sss$ first takes the value $1$ at $K$. 

\medbreak 

Recall the original definition of $\phi$ as a local inverse to $f$ defined by the rule 
$$f (T) = \int_0^T F(\tfrac{1}{3}, \tfrac{2}{3} ; \tfrac{1}{2} ; \kappa^2 \sin^2 t) \, {\rm d}t.$$ 
At its introduction, we noted that $f$ is actually defined on the whole real line; moreover, $f'$ is strictly positive and periodic there, so the inverse $\phi$ is also defined on the whole real line and the equation 
$$u = \int_0^{\phi(u)} F(\tfrac{1}{3}, \tfrac{2}{3} ; \tfrac{1}{2} ; \kappa^2 \sin^2 t) \, {\rm d}t$$ 
is valid for all real $u$. It now follows easily from $1 = \sss (K) = \sin \phi (K)$ that 
$$\phi (K) = \tfrac{1}{2} \pi$$ 
and therefore that 
$$K =  \int_0^{\tfrac{1}{2} \pi} F(\tfrac{1}{3}, \tfrac{2}{3} ; \tfrac{1}{2} ; \kappa^2 \sin^2 t) \, {\rm d}t.$$
Here, termwise integration of the hypergeometric series produces the familiar identity 
$$\int_0^{\tfrac{1}{2} \pi} F(a, b ; \tfrac{1}{2} ; \kappa^2 \sin^2 t)\, {\rm d} t = \tfrac{1}{2} \pi \, F(a, b ; 1 ; \kappa^2)$$
in which we take $a = \tfrac{1}{3}$ and $b = \tfrac{2}{3}$ to secure the formula 
$$K =  \tfrac{1}{2} \pi \, F(\tfrac{1}{3}, \tfrac{2}{3} ; 1 ; \kappa^2).$$
In context, this is arguably the most natural formula for the half-period $K$ of $\dd$. 

\medbreak 

Comparison with Theorem \ref{K} at once yields the following curious outcome. 

\medbreak 
\noindent 
{\bf Conclusion}: $F(\tfrac{1}{3}, \tfrac{2}{3} ; 1 ; \bullet)$ and $F(\tfrac{1}{2}, \tfrac{1}{2} ; 1 ; \bullet)$ are related by the identity 
$$F(\tfrac{1}{3}, \tfrac{2}{3} ; 1 ; \kappa^2) = \frac{3}{2 M} \, F(\tfrac{1}{2}, \tfrac{1}{2}; 1; k^2)$$ 
wherein 
$$\kappa = \sin 3 \aaa = \sin \aaa \, (3 - 4 \sin^2 \aaa),$$
the modulus $k$ is given by 
$$k^2 = \frac{\sin^2 \aaa \sin 2 \aaa}{\cos^2 (\aaa + \pi / 6) \cos(2 \aaa - \pi / 6)}$$ 
and the multiplier $3 / 2 M$ is given by 
$$4 M^2 = \sqrt{3} \, (2 + \cos 2 \aaa - \sqrt{3} \sin 2 \aaa) (\sqrt{3} \cos 2 \aaa + \sin 2 \aaa).$$

\medbreak 

Let us at once remove any mystery from this curious identity: it is none other than a famous identity recorded by Ramanujan in his second notebook (on page 258 in the Tata Institute publication) and proved by Berndt, Bhargava and Garvan; we quote it from [1] in the following gently massaged form. 

\medbreak 
\noindent 
{\bf Theorem 5.6}. If $0 < p < 1$ then 
$$F(\tfrac{1}{3}, \tfrac{2}{3} ; 1 ; \beta) = \frac{1 + p + p^2}{\sqrt{1 + 2 p}} \, F(\tfrac{1}{2}, \tfrac{1}{2}; 1; \alpha)$$
where 
$$\alpha = p^3 \frac{2 + p}{1 + 2 p} \; \; {\rm and} \; \; \beta = \frac{27}{4} \frac{p^2 (1 + p)^2}{(1 + p + p^2)^3}\,.$$ 

\medbreak 

We proceed to demonstrate this identity of identities. 

\medbreak 

First, we match $\kappa^2$ and $\beta$: that is, we relate $\aaa$ and $p$ by the requirement  
$$\sin^2 \aaa \, (3 - 4 \sin^2 \aaa)^2 = \kappa^2 = \beta = \frac{27}{4} \frac{p^2 (1 + p)^2}{(1 + p + p^2)^3}.$$ 
Writing $S$ for $\sin^2 \aaa$ presents us with the cubic equation  
$$S \, (3 - 4 S)^2 = \frac{27}{4} \frac{p^2 (1 + p)^2}{(1 + p + p^2)^3}$$ 
for $S$, with three solutions: namely, 
$$S = \frac{3}{4 (1 + p + p^2)}, \; S = \frac{3 p^2}{4 (1 + p + p^2)}, \; S = \frac{3 (1 + p)^2}{4 (1 + p + p^2)}; $$
the outer solutions here satisfy $S > 1/4$ while the middle solution satisfies $0 < S < 1/4$. Note that this middle solution $S$ satisfies 
$$\frac{1}{S} = \frac{4}{3} \Big(1 + \frac{1}{p} + \frac{1}{p^2}\Big)$$
so that $S$ increases strictly with $p$ and the assignment $(0, 1) \to (0, 1/4) : p \mapsto S$ is bijective. 

\medbreak 

Given $0 < p < 1$ we therefore define $0 < \aaa < \pi / 6$ by the rule 
$$\sin \aaa = \frac{\sqrt 3} {2} \, \frac{p}{\sqrt{1 + p + p^2}}$$
whence 
$$\cos \aaa = \frac{1}{2} \, \frac{2 + p}{\sqrt{1 + p + p^2}}\,.$$

\medbreak 

With this correspondence, it follows that 
$$\sin 2 \aaa = \frac{\sqrt{3}}{2} \, \frac{p (2 + p)}{1 + p + p^2}$$
and 
$$\cos 2 \aaa = \frac{1}{2} \, \frac{2 + 2 p - p^2}{1 + p + p^2} \, .$$

\medbreak 

It is now a moderately pleasing exercise to complete the identification by verifying that 
$$4 M^2 = 9 \, \frac{1 + 2 p}{(1 + p + p^2)^2}$$
and that 
$$k^2 = p^3  \frac{2 + p}{1 + 2 p}\,.$$

\medbreak 

Of course, this is only the beginning: for example, companion identities arise when complementary moduli are considered. 

\bigbreak 

The elliptic function $\dd$ thus leads to a new derivation of a famous Ramanujan identity and indeed provides a convenient entry into signature three, further cementing its place in the Ramanujan theory of elliptic functions to alternative bases. 

\bigbreak

\begin{center} 
{\small R}{\footnotesize EFERENCES}
\end{center} 
\medbreak 

[1] B.C. Berndt, S. Bhargava, and F.G. Garvan, {\it Ramanujan's theories of elliptic functions to alternative bases}, Transactions of the American Mathematical Society {\bf 347} (1995) 4163-4244. 

\medbreak 

[2] A.G. Greenhill, {\it The Applications of Elliptic Functions}, Macmillan and Company (1892); Dover Publications (1959). 

\medbreak 

[3] E. Hille, {\it Ordinary Differential Equations in the Complex Domain}, Wiley-Interscience (1976); Dover Publications (1997). 

\medbreak 

[4] P.L. Robinson, {\it Elliptic functions from $F(\frac{1}{3}, \frac{2}{3} ; \frac{1}{2} ; \bullet)$}, arXiv 1907.09938 (2019). 

\medbreak 

[5] Li-Chien Shen, {\it On the theory of elliptic functions based on $_2F_1(\frac{1}{3}, \frac{2}{3} ; \frac{1}{2} ; z)$}, Transactions of the American Mathematical Society {\bf 357}  (2004) 2043-2058. 

\medbreak 

\medbreak

\end{document}